\newcommand{\excise}[1]{}
\newtheorem{thm}{Theorem}[section]
\newtheorem{lemma}[thm]{Lemma}
\theoremstyle{definition}
\numberwithin{equation}{section}
\newcommand{\ring}[1]{\ensuremath{\mathbb{#1}}}
\newcommand\ZZ{\ring{Z}}
\newcommand\cA{{\mathcal A}}
\newcommand\cB{{\mathcal B}}
\newcommand\cD{{\mathcal D}}
\begin{document}

\mbox{}
\title[Atomic density of arithmetical congruence monoids]{Atomic density of arithmetical \\ congruence monoids}

\author[Olsson]{Nils Olsson}
\address{Mathematics and Statistics Department\\San Diego State University\\San Diego, CA 92182}
\email{nolsson6043@sdsu.edu}

\author[O'Neill]{Christopher O'Neill}
\address{Mathematics and Statistics Department\\San Diego State University\\San Diego, CA 92182}
\email{cdoneill@sdsu.edu}

\author[Rawling]{Derek Rawling}
\address{Mathematics and Statistics Department\\San Diego State University\\San Diego, CA 92182}
\email{drawling6679@sdsu.edu}

\date{\today}

\begin{abstract}
Consider the set $M_{a,b} = \{n \in \ZZ_{\ge 1} : n \equiv a \bmod b\} \cup \{1\}$ for $a, b \in \ZZ_{\ge 1}$.  If $a^2 \equiv a \bmod b$, then $M_{a,b}$ is closed under multiplication and known as an arithmetic congruence monoid (ACM).  A non-unit $n \in M_{a,b}$ is an atom if it cannot be expressed as a product of non-units, and the atomic density of $M_{a,b}$ is the limiting proportion of elements that are atoms.  In this paper, we characterize the atomic density of $M_{a,b}$ in terms of $a$ and $b$.  
\end{abstract}

\maketitle

\section{Introduction}
\label{sec:intro}

An arithmetic congruence monoid (ACM) is a multiplicative monoid of positive integers of the form
\[
M_{a,b} = \{n \in \ZZ_{\ge 1} : n \equiv a \bmod b\} \cup \{1\}.
\]
where $a, b \in \ZZ_{\ge 1}$ with $1 \le a \le b$ and $a^2 \equiv a \bmod b$ (the latter condition ensures $M_{a,b}$ is closed under multiplication).  A non-unit element $n \in M_{a,b}$ is an \emph{atom} if it cannot be written as a product of non-units; we denote the set of atoms by $\cA(M_{a,b})$.  We say $M_{a,b}$ is \emph{regular} if $a = 1$ (equivalently, if $\gcd(a,b) = 1$) and \emph{singular} if $a > 1$.  

Since their introduction~\cite{acmdelta,acmarithmetic,acmgenelast}, ACMs have been of interest in factorization theory, which studies the ways a given element can be written as a product of atoms~\cite{nonuniq}.  As~an example, in the \emph{Hilbert monoid} $M_{1,4}$, one can write $441 = 9 \cdot 49 = 21 \cdot 21$ as distinct factorizations since $9, 21, 49 \in \cA(M_{1,4})$.  
Regular ACMs are \emph{Krull}, which places them in an important family of atomic monoids~\cite{krullcombinatorialsurvey}, and singular ACMs are one of the first known ``simple'' examples of a monoid that can have non-accepted elasticity~\cite{acmacceptelast}.  See~the survey~\cite{acmsurvey} for an overview of the factorization properties of ACMs.  

To motivate the present manuscript, consider the following two notorious properties of the set $P \subseteq \ZZ_{\ge 0}$ of primes:\ (i) $P$ is sparse in $\ZZ_{\ge 0}$, which can be formalized as
\[
\lim_{n \to \infty} \frac{|P \cap \{1, \ldots, n\}|}{n} = 0
\]
among other ways; and (ii)~$P$ is sporadic in $\ZZ_{\ge 0}$, which is evidenced by the fact that, despite the above limit, there exists an $N \ge 2$ such that there are infinitely many pairs of primes whose difference is at most $N$ (to date, a bound of $N = 246$ is known~\cite{twinprimeprogress}, though the twin prime conjecture states that $N = 2$ is possible).  

It is natural to ask whether there are generalizations of~(i) and~(ii) for the set $\cA(M_{a,b})$ of atoms in a given ACM $M_{a,b}$.  The latter question was answered in~\cite{acmperiod}, where it was shown that the set $\cA(M_{a,b})$ forms an eventually periodic sequence if and only if $a > 1$ and $a \mid b$.  In this paper, we answer question~(i) by identifying the density of $\mathcal A(M_{a,b})$ in $M_{a,b}$.  
To this end, define the \emph{atomic density} of $M_{a,b}$ as 
\[
\cD(M_{a,b})
=
\lim_{n \to \infty}
\frac{|\cA(M_{a,b}) \cap \{1, \ldots, n\}|}{|M_{a,b} \cap \{1, \ldots, n\}|},
\]
a variant of which was defined in~\cite{algebrasatomicdensity}.  
Our main result is the following formula.  

\begin{thm}\label{t:mainthm}
If $M_{a,b}$ is an ACM and $q = \gcd(a,b)$, then 
\[
\cD(M_{a,b}) = 1-\tfrac{1}{q}.
\]
\end{thm}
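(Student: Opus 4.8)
The plan is to evaluate the limit by estimating the three counts -- monoid elements, atoms, and non-atoms -- up to $N$ separately. The denominator is immediate: $M_{a,b} \cap \{1,\dots,N\}$ consists of $1$ together with the integers $\equiv a \pmod b$ in range, so $|M_{a,b} \cap \{1,\dots,N\}| = N/b + O(1)$. Setting $q' = \gcd(a-1,b)$, the relation $a^2 \equiv a \pmod b$ together with $\gcd(a,a-1)=1$ forces $b = qq'$ with $\gcd(q,q')=1$; hence a non-unit $n$ lies in $M_{a,b}$ exactly when $q \mid n$ and $n \equiv 1 \pmod{q'}$ (so in particular $\gcd(n,q')=1$). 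I would then prove the formula by splitting into the regular case $q=1$ and the singular case $q \ge 2$, since the two behave very differently: in the former, atoms are sparse and the density is $0$, while in the latter a positive proportion of elements are non-atoms.

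In the regular case ($q=1$, target $\cD(M_{1,b})=0$) it suffices to show $|\cA(M_{1,b}) \cap \{1,\dots,N\}| = o(N)$. Factoring $n = p_1 \cdots p_k \in M_{1,b}$ into primes and recording the residues $p_i \bmod b$ in the group $G = (\ZZ/b\ZZ)^\times$, the element $n$ is an atom precisely when this sequence of residues is a minimal product-one sequence (no proper nonempty subsequence has product $1$): a proper subproduct $\equiv 1 \pmod b$ together with its complement yields a nontrivial factorization in $M_{1,b}$, and conversely. Consequently the number of prime factors of any atom is at most the Davenport constant $D(G) \le |G|$, a constant depending only on $b$. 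Integers up to $N$ with a bounded number of prime factors number $O\!\big(N (\log\log N)^{D(G)-1}/\log N\big) = o(N)$ by Landau's theorem, so $\cA(M_{1,b})$ has density $0$.

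In the singular case ($q \ge 2$) I would show the non-atoms have density $1/q$ among the elements. First, if $n = xy$ with $x,y \in M_{a,b}$ non-units, then $q \mid x$ and $q \mid y$, so every non-atom satisfies $q^2 \mid n$. The set $\{n \in M_{a,b} : q^2 \mid n,\ n \le N\}$ is a single residue class modulo $q^2 q' = qb$ (by CRT, using $\gcd(q^2,q')=1$), hence has size $N/(qb) + O(1)$. It remains to see that all but $o(N)$ of these are non-atoms. If such an $n$ has a prime factor $p \nmid b$ with $p \equiv q^{-1} \pmod{q'}$ (the inverse of $q$ modulo $q'$, which exists since $\gcd(q,q')=1$), then $x = qp$ and $y = n/(qp)$ both lie in $M_{a,b}$: one checks $q \mid y$ from $q^2 \mid n$, while $x \equiv q\cdot q^{-1} \equiv 1$ and $y \equiv 1 \pmod{q'}$. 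This exhibits $n$ as a non-atom. The remaining $n$ are built entirely from the primes dividing $q$ together with primes avoiding the class $q^{-1} \pmod{q'}$; this is a set of primes of density $1 - 1/\phi(q') < 1$ when $q' \ge 3$ (and only finitely many primes when $q' \le 2$), so the number of such $n \le N$ is $o(N)$ by the Selberg--Delange/Wirsing estimate. Therefore the non-atoms number $N/(qb) + o(N)$, and dividing by $N/b + O(1)$ gives non-atom density $1/q$, whence $\cD(M_{a,b}) = 1 - 1/q$.

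The main obstacle is the analytic input in the singular case: controlling the exceptional elements with $q^2 \mid n$ that nonetheless fail to factor. Their negligibility rests on the fact that integers all of whose prime factors avoid a fixed arithmetic progression (a positive proportion of primes) have density zero -- the Landau/Selberg--Delange/Wirsing circle of results -- which is the one genuinely non-elementary ingredient. The remaining work (the decomposition $b = qq'$, the minimal-product-one characterization of atoms, and the explicit factorization $n = (qp)\cdot(n/qp)$) is structural and elementary; the only care needed is to treat the degenerate subcases $q' \in \{1,2\}$, where the residue condition is vacuous and the characterization ``$n$ is a non-atom $\iff q^2 \mid n$'' is in fact exact.
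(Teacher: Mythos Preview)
Your proof is correct and follows the same strategy as the paper: reducible elements are divisible by $q^2$; almost every $q^2$-divisible element of $M_{a,b}$ is reducible (by the analytic fact that integers avoiding a fixed arithmetic progression of primes have density zero); and the $q^2$-divisible elements make up exactly a $1/q$ fraction of $M_{a,b}$ since $\gcd(q,b/q)=1$. The differences are cosmetic: your parametrization $b=qq'$ with $q'=\gcd(a-1,b)$ coincides with the paper's $b'=b/q$, your residue class $[q^{-1}]_{q'}$ is the paper's $[a']_{b'}$, you split off a single factor $qp$ where the paper splits off two, and you treat the regular case $q=1$ separately via the Davenport constant while the paper absorbs it into the general argument (and instead separates the case $a=b$).
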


A proof of Theorem~\ref{t:mainthm} is given in Section~\ref{sec:mainthm}, after some examples and an important lemma from analytic number theory in Section~\ref{sec:setup}.

\section{Setup}
\label{sec:setup}

To set the stage for the proof of Theorem~\ref{t:mainthm}, we briefly turn out attention to regular ACMs.  An element $n \in M_{1,5}$ is an atom if and only if replacing each prime in the integer factorization of $n$ with its equivalence class modulo 5 yields
\[
[1]_5,
\quad
([2]_5)^4,
\quad
([3]_5)^4,
\quad
([4]_5)^2,
\quad
([2]_5)^2[4]_5,
\quad
([3]_5)^2[4]_5,
\quad \text{or} \quad
[2]_5[3]_5.
\]
This means there is a bound (in this case, 4) on the number of primes in the integer factorization of any element of $\cA(M_{1,5})$.  The existence of such a bound implies $\mathcal A(M_{1,5})$ has density 0 by a well-known fact from analytic number theory, and thus $\cD(M_{1,5}) = 0$.  

This is an instance of a more general construction that distinguishes regular ACMs from those that are singular.  The centered expressions above are elements of the \emph{block monoid} over $G = \ZZ_5^*$ (which is the set $\cB(G)$ of commutative words with alphabet $G$ that evaluate to the identity of~$G$) that are \emph{atoms} (i.e., words that cannot be obtained by concatenating two nonempty words in $\cB(G)$).  
In fact, for any $b \ge 1$, the atoms of $M_{1,b}$ can be partitioned based on which atom of $\cB(\ZZ_b^*)$ is obtained by replacing each prime in their factorization with its equivalence class in $\ZZ_b^*$.  
This is enough to determine $\cD(M_{1,b}) = 0$, since the number of primes in the factorization of any integer in $\cA(M_{1,b})$ is bounded by the number of letters in the longest atom of $\cB(\ZZ_b^*)$ (known as the \emph{Davenport constant} of $\cB(\ZZ_b^*)$).  See~\cite[Chapter~5]{nonuniq} for a thorough overview.  

The construction described in the predecing paragraph can be made rigorous using what is known as a \emph{transfer homomorphism}; we direct the interested reader to~\cite{acmsurvey} for details.  However, atoms in a singluar ACM can have arbitrarily many primes in their integer factorization; for example, in the \emph{Meyerson monoid} $M_{4,6}$, we have 
\[
4 \cdot 7^k \in \cA(M_{4,6})
\qquad \text{for any} \qquad
k \ge 1.
\]
The key to proving Theorem~\ref{t:mainthm} turns out to be the following lemma, which implies the set of integers with a bounded number of primes in a given equivalence class $[a]_b$ has density 0 in $\ZZ_{\ge 1}$, so long as $\gcd(a,b) = 1$.  

\begin{lemma}\label{l:keydensitylemma}
Fix positive integers $a$, $b$, and $N$ with $gcd(a,b) = 1$, and let
\[
A(a,b,N) = \{n \in \ZZ : \text{at most $N$ primes in the factorization of $n$ lie in } [a]_b\}.
\]
The set $A(a,b,N)$ has density 0 in $\ZZ_{\ge 1}$, that is, 
\[
\lim_{n \to \infty}  \frac{| A(a,b,N) \cap \{1, \ldots, n\} |}{n} = 0.
\]
\end{lemma}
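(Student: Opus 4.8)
The plan is to reduce the statement to the classical Eratosthenes-type fact that the integers divisible by no prime from a set $S$ with $\sum_{p\in S}1/p=\infty$ form a set of density zero, and to supply that fact with the only genuinely analytic ingredient we need: that the primes congruent to $a$ modulo $b$ have divergent reciprocal sum. So the first thing I would record is this input. Since $\gcd(a,b)=1$, Dirichlet's theorem—or more precisely the Mertens-type estimate $\sum_{p\le x,\,p\equiv a\,(\mathrm{mod}\,b)}1/p=\frac{1}{\varphi(b)}\log\log x+O(1)$—shows that, listing the primes in $[a]_b$ as $p_1<p_2<\cdots$, we have $\sum_i 1/p_i=\infty$. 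This divergence is the crux; everything afterward is elementary. I would also reduce at once to the set $A'$ of integers with at most $N$ \emph{distinct} prime divisors in $[a]_b$: an integer counted as having at most $N$ primes in $[a]_b$ with multiplicity has at most $N$ such primes without multiplicity, so $A(a,b,N)\subseteq A'$, and it suffices to prove $A'$ has density $0$.

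The central step is a pigeonhole partition. Because $\sum_i 1/p_i=\infty$, I can cut the sequence $p_1,p_2,\dots$ into consecutive blocks $B_1,B_2,\dots$ with $\sum_{p\in B_j}1/p\ge 1$ for every $j$ (extend each block until its reciprocal sum reaches $1$, which is always possible since the tail still diverges). Assigning block $B_j$ to group $G_{j\bmod(N+1)}$ produces $N+1$ groups $G_0,\dots,G_N$, each receiving infinitely many blocks and hence satisfying $\sum_{p\in G_r}1/p=\infty$. Now if $n\in A'$, its at most $N$ distinct prime divisors lying in $[a]_b$ can meet at most $N$ of the $N+1$ groups, so some group $G_r$ contributes no divisor of $n$. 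This gives the containment $A'\subseteq\bigcup_{r=0}^N C_r$, where $C_r=\{n:p\nmid n\text{ for all }p\in G_r\}$.

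It then remains to show each $C_r$ has density $0$ and invoke finite additivity. For any finite $S\subseteq G_r$, the set $C_r$ sits inside the residue classes modulo $\prod_{p\in S}p$ that are coprime to all $p\in S$, a periodic set of density $\prod_{p\in S}(1-1/p)$; hence the upper density of $C_r$ is at most $\prod_{p\in S}(1-1/p)$ for every finite $S\subseteq G_r$. Since $-\log\prod_{p\in S}(1-1/p)\ge\sum_{p\in S}1/p\to\infty$ as $S$ exhausts $G_r$, this product tends to $0$, so $C_r$ has density $0$. A finite union of density-zero sets has density zero, so $A'$, and therefore $A(a,b,N)$, has density $0$.

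The main obstacle is genuinely the analytic input of the first step—the divergence of the reciprocal sum over primes in the progression $[a]_b$—which is exactly what fails when $\gcd(a,b)>1$ and is where the hypothesis is used; the combinatorial scaffolding (the block partition and the product bound) is routine once divergence is in hand. I would note one alternative to the pigeonhole step: a second-moment argument in which the count $\omega_{a,b}(n)$ of distinct prime divisors of $n$ in $[a]_b$ has mean $\sim\frac{1}{\varphi(b)}\log\log x$ and comparable variance over $n\le x$, so Chebyshev's inequality forces $\{\,\omega_{a,b}(n)\le N\,\}$ to have density $0$. I prefer the pigeonhole route, as it sidesteps the variance computation and isolates the single analytic fact being used.
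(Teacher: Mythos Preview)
Your argument is correct. The paper's own proof of this lemma is a one-line citation: it invokes \cite[Theorem~08]{divisorsbook} (Hall--Tenenbaum, \emph{Divisors}) together with the divergence of $\sum_{p\equiv a\,(b)} 1/p$. You have instead supplied a self-contained elementary proof of precisely the special case of that theorem being used. The key analytic input---divergence of the reciprocal prime sum over the progression, which is where $\gcd(a,b)=1$ enters---is the same in both. What differs is that the paper black-boxes the sieve step, while you carry it out explicitly via the pigeonhole partition into $N+1$ groups $G_0,\dots,G_N$ each with divergent reciprocal sum, the containment $A'\subseteq\bigcup_r C_r$, and the Euler-product bound $\prod_{p\in S}(1-1/p)\to 0$. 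Your route buys transparency and independence from an external reference at the cost of a few more lines; the paper's route is terser but opaque unless the reader has Hall--Tenenbaum to hand. The second-moment alternative you mention (via the normal order of $\omega_{a,b}$) would also work and is closer in spirit to what the cited theorem likely proves, but your pigeonhole argument is cleaner for this purpose.
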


\begin{proof}
This follows from \cite[Theorem~08]{divisorsbook} and the fact that the sums of the reciprocals of the primes in $[a]_b$ tends to infinity.  
\end{proof}

We illustrate our intended use of Lemma~\ref{l:keydensitylemma} with an example.  
Consider the partition of the Meyerson monoid $M_{4,6}$ into 
\[
Q = \{n \in M_{4,6} : 4 \mid n\}
\qquad \text{and} \qquad
R = \{n \in M_{4,6} : 4 \nmid n\}.
\]
If $p, p' \in [5]_6$ are prime, then $4pp' = (2p)(2p')$ and is therefore reducible in $M_{4,6}$.  As~such, any atom in $Q \cap \mathcal A(M_{4,6})$ has at most one prime in $[5]_6$, meaning
\[
Q \cap \mathcal A(M_{4,6}) \subseteq A(5,6,2).
\]
This implies $Q \cap \mathcal A(M_{4,6})$ has density 0 by Lemma~\ref{l:keydensitylemma}.  
On the other hand, $R$ contains no reducible elements of $M_{4,6}$, and thus $R \subseteq \mathcal A(M_{4,6})$.  
As such, $\mathcal D(M_{4,6})$ is simply the density of $R$ in $M_{4,6}$. Since 
\[
M_{4,6} = \{2k : k \in [2]_6 \cup [5]_6\}
\qquad \text{and} \qquad
R = \{2k : k \in [5]_6\},
\]
we obtain 
\[
\mathcal D(M_{4,6})
= \lim_{n \to \infty} \frac{|R \cap \{1, \ldots, n\}|}{|M_{4,6} \cap \{1, \ldots, n\}|}
= \lim_{n \to \infty} \frac{|[5]_6 \cap \{1, \ldots, n\}|}{|([2]_6 \cup [5]_6) \cap \{1, \ldots, n\}|}
= \frac{1}{2}.
\]

\section{Proof of Theorem~\ref{t:mainthm}}
\label{sec:mainthm}

We now prove Theorem~\ref{t:mainthm}, after a short lemma.  

\begin{lemma}\label{l:thelemma}
Suppose $M_{a,b}$ is an ACM, let $q = \gcd(a,b)$, and write $a = qa'$ and $b = qb'$ with $a', b' \in \ZZ$.  The following hold:
\begin{enumerate}[(a)]
\item 
any reducible element $n \in M_{a,b}$ satisfies $q^2 \mid n$; and

\item 
$\gcd(q,b') = 1$.  

\end{enumerate}
\end{lemma}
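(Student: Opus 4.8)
The plan is to treat the two parts separately, with part~(a) following almost immediately from the definition of reducibility and part~(b) from a short manipulation of the defining congruence $a^2 \equiv a \bmod b$.

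For part~(a), I would begin by writing a reducible element as $n = xy$ for some non-units $x, y \in M_{a,b}$. Since the only unit of $M_{a,b}$ is $1$, both factors satisfy $x \equiv y \equiv a \bmod b$. Because $q = \gcd(a,b)$ divides both $a$ and $b$, reducing the congruence $x \equiv a \bmod b$ modulo $q$ gives $x \equiv a \equiv 0 \bmod q$, so $q \mid x$; the identical argument gives $q \mid y$. Multiplying the two yields $q^2 \mid xy = n$, as desired. This step is routine and I do not anticipate any difficulty.

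For part~(b), I would first record that $\gcd(a,b) = q$ together with $a = qa'$ and $b = qb'$ forces $\gcd(a', b') = 1$, since $\gcd(qa', qb') = q\gcd(a',b')$. The key input is then the defining congruence, which says $b \mid a^2 - a = a(a-1)$. Substituting $a = qa'$ and $b = qb'$ and cancelling a factor of $q$ turns this into $b' \mid a'(qa' - 1)$. Now I would invoke $\gcd(a',b') = 1$ to conclude $b' \mid qa' - 1$, i.e.\ $qa' \equiv 1 \bmod b'$. This exhibits $a'$ as a multiplicative inverse of $q$ modulo $b'$, which is exactly the statement that $\gcd(q, b') = 1$.

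The only genuinely nontrivial step is the passage in part~(b) from $b \mid a(a-1)$ to $b' \mid qa' - 1$, where the coprimality $\gcd(a', b') = 1$ must be used to strip off the spurious $a'$ factor; everything else is bookkeeping. I expect this to be the main obstacle, though a modest one.
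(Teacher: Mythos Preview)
Your proposal is correct. Part~(a) is essentially identical to the paper's argument: the paper writes the two factors explicitly as $a+kb = q(a'+kb')$ and $a+\ell b = q(a'+\ell b')$ and multiplies, while you phrase the same observation as ``$q\mid x$ and $q\mid y$''.

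For part~(b), your route differs modestly from the paper's. You cancel a factor of $q$ from $qb' \mid qa'(qa'-1)$, strip off $a'$ using $\gcd(a',b')=1$, and arrive at the clean statement $qa' \equiv 1 \bmod b'$, exhibiting $a'$ as an explicit inverse of $q$ modulo $b'$. The paper instead argues prime by prime: for each prime $p \mid q$ it observes $p \nmid (a-1)$, so from $b \mid a(a-1)$ one gets $v_p(b) \le v_p(a)$, whence $v_p(q) = v_p(b)$ and $p \nmid b'$. Your argument is arguably tidier and yields the bonus information that $a'$ is the inverse of $q$ modulo $b'$; the paper's valuation approach is more hands-on but reaches the same conclusion with comparable effort.
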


\begin{proof}
If $n = a + kb, m = a + \ell b \in M_{a,b}$ with $k, \ell \in \ZZ_{\ge 0}$, then 
$$nm = (a + kb)(a + \ell b) = q^2(a' + kb')(a' + \ell b')$$
which proves part~(a).  For part~(b), any prime $p$ with $p \mid q$ must have $p \mid a$ and $p \nmid (a - 1)$, so since 
$$b \mid (a^2 - a) = a(a - 1),$$
any power of $p$ dividing $b$ must also divide $a$.  Since $a'$ and $b'$ cannot both have $p$ as a divisor, this forces $\gcd(q,b') = 1$.  
\end{proof}

\begin{proof}[Proof of Theorem~\ref{t:mainthm}]
If $a = b$, then $M_{a,b} = b\ZZ_{\ge 1} \cup \{1\}$, and $\cA(M_{a,b}) = b\ZZ_{\ge 1} \setminus b^2\ZZ_{\ge 1}$,~so 
\[
\cD(M_{a,b})
= \lim_{n\to\infty}\frac{|(b\ZZ_{\ge 1} \setminus b^2\ZZ_{\ge 1}) \cap \{1\ldots n\}|}{|b\ZZ_{\ge 1} \cap \{1\ldots n\}|}
= 1 - \lim_{n\to\infty}\frac{|b^2\ZZ_{\ge 1} \cap \{1\ldots n\}|}{|b\ZZ_{\ge 1} \cap \{1\ldots n\}|}
= 1 - \frac{1}{b}.
\]
Otherwise, assume $a < b$, write $a = qa'$ and $b = qb'$ for $a', b' \in \ZZ_{\ge 0}$, and let 
\[
R = \{n \in M_{a,b} : q^2 \nmid n\}.
\]
By Lemma~\ref{l:thelemma}(a) $q^2$ divides any reducible element of $M_{a,b}$, so $R \subseteq \cA(M_{a,b})$.  

We claim the set $\cA(M_{a,b}) \setminus R$ has density 0.  To this end, fix primes $p_1, p_2 \equiv a' \bmod b'$ with $p_1, p_2 \nmid q$.  If $n \in M_{a,b} \setminus R$ satisfies  $p_1p_2 \mid n$, then we can write
\[
n = q^2 p_1 p_2 m = (qp_1)(qp_2m)
\]
for some $m \in \ZZ_{\ge 1}$ with $am \equiv a \bmod b$.  This means $n$ is reducible in $M_{a,b}$ since 
\[
qp_1 = q(a' + k_1b') = a + k_1b
\qquad \text{and} \qquad
qp_2m = qm(a' + k_2b') = (am + k_2mb)
\]
for some $k_1, k_2 \in \ZZ$.  
As such, for any $n \in \cA(M_{a,b}) \setminus R$, the number of prime factors of $n$ in $[a']_{b'}$ can exceed the number of prime factors of $q$ in $[a']_{b'}$ by at most one.  Lemma~\ref{l:keydensitylemma} thus implies $\cA(M_{a,b}) \setminus R$ has density 0.  

Having shown this, we observe that
\[
M_{a,b} = \{q(a' + k b'): k \in \ZZ_{\ge 0} \} = q \cdot [a']_{b'}
\]
and conclude
\begin{align*}
\cD(M_{a,b})
&= \lim_{n\to\infty}\frac{|R \cap \{1\ldots n\}|}{|M_{a,b} \cap \{1\ldots n\}|}
= 1 - \lim_{n\to\infty}\frac{|(M_{a,b} \setminus R) \cap \{1\ldots n\}|}{|M_{a,b} \cap \{1\ldots n\}|}
\\
&= 1 - \lim_{n\to\infty}\frac{|q^2 \ZZ \cap q \cdot [a']_{b'} \cap \{1\ldots n\}|}{|q \cdot [a']_{b'} \cap \{1\ldots n\}|}
\\
&= 1 - \lim_{n\to\infty}\frac{|q \ZZ \cap [a']_{b'} \cap \{1\ldots n\}|}{|[a']_{b'} \cap \{1\ldots n\}|}
= 1 - \frac{1}{q},
\end{align*}
where the final equality follows from the fact that
$q\ZZ \cap [a']_{b'}$ contains one out of every $q$ elements of $[a']_{b'}$ since $\gcd(q,b') = 1$ by Lemma~\ref{l:thelemma}(b).  
\end{proof}

\section*{Acknowledgements}

The authors would like to thank Vadim Ponomarenko for several helpful conversations and Paul Pollock for directing us to an appropriate citation for Lemma~\ref{l:keydensitylemma}.  We~would also like to thank Adam Hoyt, Yuze Luan, and Melanie Zhang for their early computational work on the project.


\end{document}